\newtheorem{theorem}{Theorem}
\newtheorem{lemma}[theorem]{Lemma}
\newtheorem{remark}[theorem]{Remark}
\newtheorem{proposition}[theorem]{Proposition}
\newtheorem{definition}[theorem]{Definition}
\newtheorem{example}[theorem]{Example}
\newtheorem{conjecture}{Conjecture}
\theoremstyle{definition}
\newcommand{\cL}{\mathcal{L}}
\newcommand{\cV}{\mathcal{V}}
\newcommand{\cW}{\mathcal{W}}
\newcommand{\RR}{\mathbb R}
\newcommand{\F}{\mathbb F}
\newcommand{\e}{\epsilon}
\newcommand{\Tr}{\textup{Tr}}
\newcommand{\Fq}{\mathbb{F}_q}
\def\set4{\mathcal I}
\def\tup14{(1,2,3,4)}
\def\bv{{\mathbf v}}
\newtheorem*{comm*}{Comment}
\newtheorem*{lemma*}{Lemma}
\newcommand{\supp}{\mathrm{supp}}
\newcommand{\R}{\mathbb{R}}
\newcommand{\wh}{\widehat}
\newcommand{\si}{\sigma}
\newcommand{\Id}{\boldsymbol 1}
\newcommand{\Fp}{\mathbb{F}_p}
\newcommand{\bdA}{\mathbf{A}}
\newcommand{\bdE}{\mathbf{E}}
\begin{document}




 \author{Paige Bright}
 \address{Department of Mathematics\\
 Massachusetts Institute of Technology\\
 Cambridge, MA 02142-4307, USA}
 \email{paigeb@mit.edu}
 
 \author{Shengwen Gan}
 \address{Department of Mathematics\\
 Massachusetts Institute of Technology\\
 Cambridge, MA 02142-4307, USA}
 \email{shengwen@mit.edu}

\keywords{projection theory, exceptional set estimate}
\subjclass[2020]{28A75, 28A78}

\date{}

\title{Exceptional set estimates in finite fields}
\maketitle

\begin{abstract}
We study the exceptional set estimate for projections in $\F_q^n$. For each $V\in G(k,\mathbb{F}^n_q)$, let
$$ \pi_V: \mathbb{F}_q^n\rightarrow V $$
be the projection map.
We prove the following result:

If $A\subset \mathbb{F}_q^n$ with $\#A=q^a$ ($n-1\le a\le n$) and $0< s<\frac{a+n-2}{2}$, then 
\begin{equation*}
    \# \{V\in G(n-1,\mathbb{F}^n_q): \#\pi_V(A)< q^s  \}\lessapprox  q^{n-2}.
\end{equation*}
This improves the previous range $0<s<\frac{n-1}{n}a$.
Also, our range of $s$ is sharp in the sense that if $s>\frac{a+n-2}{2}$, then the right hand side above should be at least $q^t$ for some $t>n-2$.

\end{abstract}


\section{Introduction}

We first introduce the problem of exceptional set estimate in the Euclidean space. 
Let $G(k,\R^n)$ be the set of $k$-dimensional subspaces in $\R^n$ and $A(k,\R^n)$ be the set of $k$-dimensional affine subspaces in $\R^n$. For $W\in G(k,\R^n)$, define $\pi_W:\RR^n\rightarrow W$ to be the orthogonal projection onto $W$. Marstrand's projection theorem then states that the projection maps $\pi_W$ preserve Hausdorff dimension of Borel sets for almost every $W\in G(k,\R^n)$. More precisely, given $A\subset \R^n$, then
\[ \dim(\pi_W(A))=\min\{k,\dim(A)\} \]
for almost every $W\in G(k,\R^n)$.

The problem of exceptional set estimates is to find the set of $W$ such that the equation above fails. We fix a parameter $0<s<\min\{k,\dim(A)\}$, and define the exceptional set
\begin{equation}\label{exset}
    E_{s}(A):=\{W\in G(k,\R^n): \dim(\pi_W(A))<s\}.
\end{equation}

\begin{remark}
    \rm{We remark that $E_s(A)$ also depends on $k$, but we omit it as $k$ is clear from the context.}
\end{remark}

As far as the authors know, there are three types of the exceptional set estimates for the orthogonal projections $\{\pi_W: W\in G(k,\R^n)\}$. We state these results. For simplicity, we denote $\dim(A)=a$.

\begin{enumerate}[label=(\roman*)]
    \item $\dim (E_s(A))\le k(n-k)+s-k$. \label{1}
    \item $\dim (E_s(A))\le \max\{k(n-k)+s-a,0\}$.\label{2}
    \item $\dim(E_{\frac{k}{n}a}(A))\le k(n-k)-1$.\label{3}
\end{enumerate}

The first one is known as the Kaufman-type estimate (\cite{kaufman1968hausdorff}).
The second one is known as the Falconer-type estimate (\cite{falconer1982hausdorff}). The Falconer-type estimate was also proved by Peres and Schlag \cite{peres2000smoothness}.
The third one is due to He (\cite{he2020orthogonal}).

\bigskip

\subsection{Motivation}
We first consider a special case when $n=3,k=2$ and $\dim A=2$. Both \ref{1} and \ref{2} imply that for any $A\subset \R^3$ with $\dim A=2$, 
\begin{equation}\label{ex0}
    \dim(E_s(A))=\dim(\{V\in G(2,\R^3): \dim(\pi_V(A)\})<s\})\le s. \
\end{equation} 
This estimate is sharp when $s\rightarrow 1^+$. The example is when $A=\R^2\times \{0\}$. For such $A$ and $1<s<2$, we see that $E_s(A)=\{V\in G(2,\R^2): V\textup{~parallel~to~}(0,0,1)\}$, which has dimension $1$. When $s$ is essentially bigger than $1$, the example does not match the upper bound in \eqref{ex0}, so we suspect \eqref{ex0} is not sharp when $s>1$. In fact, \ref{3} shows that
\[ \dim(E_{\frac{4}{3}}(A))\le 1. \]

We begin to think about the following question. Suppose we consider the case $n=3,k=2$. What is the largest $s$, so that for any $\dim(A)=2$,
\[ \dim(E_s(A))\le 1? \]

In this paper, we will show in the finite field setting that this largest $s$ is $\frac{3}{2}$. We will prove a general theorem (Theorem \ref{mainthm}) for all the dimensions. We will also discuss the obstruction to generalize the finite field version to $\R^n$ at the end of the paper.

\subsection{Finite field version}
Let $p$ be a prime number and $q=p^r$.
The goal of this paper is to study exceptional set estimates over the finite field $\F_q^n$.
For any subspace $V\subset \Fq^n$, we can define the orthogonal projection
\[ \pi_V:\Fq^n\rightarrow V \]
in the natural way. (For precise statement, see Definition \ref{defpiv}.)

\begin{definition}[Exceptional set in finite field]
For $A\subset \Fq^n$ and a number $s>0$, we define the $s$-exceptional set of $A$ for projection to $k$-planes  to be
\begin{equation}\label{defex}
    E_s(A):=\{ V\in G(k,\Fq^n): \#\pi_V(A)<q^s \}.
\end{equation}
\end{definition}
\begin{remark}
\rm
One should think of $E_s(A)=E_s^{n,k}(A)$ depends on another two parameters: $n$ the ambient dimension; $k$ the dimension of the planes where $A$ is projected to. 
Since $k,n$ will be clear, we just drop $n,k$ from our notation for simplicity.
\end{remark}

It is not surprising that \ref{1}, \ref{2} and \ref{3} have their corresponding finite field version. The following result is obtained by Chen \cite[Theorem 1.2]{chen2018projections}.

\begin{proposition}[Chen]\label{prop6}
    Let $A\subset \Fq^n$ be a set with $\#A=q^a$ $(0<a<n)$. For $s\in (0,\min\{k,a\})$, we have the following estimates:
\begin{enumerate}[label=(\roman*)]
    \item (Finite field version of \textup{\ref{1}}) $\# E_s(A)\lesssim q^{k(n-k)+s-k}$;\label{01}
    \item (Finite field version of \textup{\ref{2}}) $\# E_s(A)\lesssim q^{\max\{k(n-k)+s-a,0\}}$.\label{02}
\end{enumerate}
\end{proposition}
\begin{remark}
{\rm
    The original result is stated for the prime field $\Fp$, but we believe it also holds for the general finite field $\Fq$. There may also be the finite field version of \ref{3}, but unfortunately we did not find the reference. Though, we still state the finite field version of \ref{3} to compare with our theorem. }    
\end{remark}

\noindent 
\textit{(iii) (Finite field version of} \ref{3}\textit{)} $\# E_s(A)\lesssim q^{k(n-k)-1}$, for $s<\frac{k}{n}a$.

\bigskip

Next, we state our main theorem.
\begin{theorem}\label{mainthm}
    Let $A\subset \Fq^n$ be a set with $\#A=p^a$ $(0<a<n)$.
Then for $0<s<\frac{a+2k-n}{2}$, we have
\begin{equation}\label{mainthmest}
    \# \{ V\in G(k,\Fq^n): \#\pi_V(A)<q^s \}\le C_{n,k,a,s}\cdot\log q\cdot q^{t(a,s)},
\end{equation}
where $t(a,s)=\max\{k(n-k)+2(s-a), (k-1)(n-k)\}$. Here, $C_{n,k,a,s}$ is a constant that may depend on $n,k,a,s$, but not depend on $p$.
\end{theorem}

In the theorem, one particularly interesting case is when $k=n-1$, $a\ge n-1$ and $s>a-1$. This range of parameters is our motivation to work on this project, so we would like to also state the theorem for this specific range of parameters. 

\begin{theorem}\label{spethm}
    Let $A\subset \Fq^n$ be a set with $\#A=p^a$ $(n-1\le a<n)$. 
Then for $0< s<\frac{a+n-2}{2}$, we have
\begin{equation}\label{spethmest}
    \# \{ V\in G(n-1,\Fq^n): \#\pi_V(A)<q^s \}\lesssim C_{n,a,s}\cdot\log q\cdot q^{n-2}.
\end{equation}
\end{theorem}

\begin{figure}
\begin{tikzpicture}
\begin{centering}
\tikzmath{
\r1=3;
\r2=5;
\q1 = 5;
\q2 = 10;
\q3 = 15;
\q4 = 20;
\x1 = \q1; \y1 = 5;
\x2 = \q2; \y2 = 5;
\x3 = \q3; \y3 = 5;
\x4 = \q4; \y4 = 8;
\x5 = \q4; \y5 = 10;
} 
\begin{axis}[
axis x line=middle,
axis y line=middle,
xtick={\x1,\x2,\x3,\x4,\x4+5},
xticklabels={$a-1$,$a \cdot \frac{n-1}{n}$,$\frac{a+n-2}{2}$,$n-1$,$\mathbf{s}$},
xlabel near ticks,
ytick={},
yticklabels={},
xlabel near ticks,
xmax=\x4+5,
ymax=\y5+4,
xmin=0,
ymin=0,
ylabel=$\mathbf{t(a,s)}$,
]
\addplot[domain=\q1:\q2] {5};
\addplot[domain=\q2:\q3] {5};
\addplot[domain=\q3:\q4] {x-10};
\draw[dotted] (axis cs:\x1,\y1) -- (axis cs:\x1, 0);
\draw[dotted] (axis cs:\x2,\y2) -- (axis cs:\x2, 0);
\draw[dotted] (axis cs:\x3,\y3) -- (axis cs:\x3, 0);
\draw[dotted] (axis cs:\x4,\y5) -- (axis cs:\x4, 0);
\addplot[only marks,mark=*] coordinates{(5,5)(10,5)(15,5)(20,10)};
\node [above] at (49,53) {$A$};
\node [above] at (98,53) {$B$};
\node [above] at (147,53) {$C$};
\node [above] at (210,100) {$D$};

\end{axis}
\end{centering}
\end{tikzpicture}

\caption{The range of $t$ when $k=n-1$, $a\ge n-1$ and $s>a-1$}
\label{grph}
\end{figure}
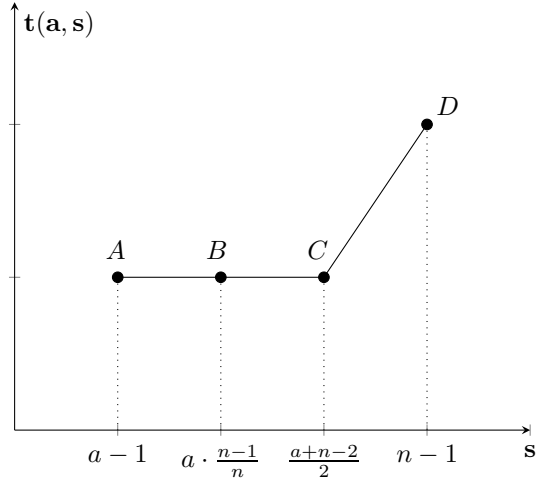

\begin{remark}
\rm
We will discuss how sharp our estimate \eqref{spethmest} is. We look at Figure \ref{grph}: The three points $A,B,C$ have the same ordinate $n-2$ and respective abscissas indicated in the figure; The point $D$ has the coordinate $(n-1,2(n-1)-a)$; The line segment $C-D$ is given by the equation $t(a,s)=2s-a$. We will construct examples in Section \ref{section3} to show that the necessary condition for \eqref{mainthmest} to hold is when $(s,t(a,s))$ lies above the graph $A-B-C-D$. The estimate \ref{3} indicates that a sufficient condition for \eqref{mainthmest} to hold is when $(s,t(a,s))$ lies above $A-B$. Our Theorem \ref{spethm} extends \ref{3} by showing that $\eqref{mainthmest}$ holds when $(s,t(a,s))$ lies above $A-B-C$. We see that our theorem finds the optimal range of $s$ for which $t(a,s)$ could be $n-2$.

\medskip

Though we proved the estimate in finite fields, it may be reasonable to ask whether it is able to prove the analogue in $\R^n$: For $\dim(A)=a$ and $s<\frac{a+2k-n}{2}$, do we have
\begin{equation}
    \dim(E_s(A))\le \max\{k(n-k)+2(s-a), (k-1)(n-k)\}? 
\end{equation} 
However, it is hard to generalize our proof to the Euclidean space. We will talk about the obstruction at the end of the paper.

\end{remark}


We talk about the structure of the paper. In Section \ref{section3}, we talk about some examples. In Section \ref{section4}, we briefly review the Fourier transform in finite fields. In Section \ref{section5}, we prove Theorem \ref{mainthm}. 


\bigskip

\begin{sloppypar}
\noindent {\bf Acknowledgement.}
We would like to thank Prof. Larry Guth for numerous helpful discussions over the course of this project.
\end{sloppypar}



\section{Definition of the orthogonal projections}\label{section2}

Let $p$ be a prime number and $q=p^r$.
The goal of this paper is to study exceptional set estimates over the finite field $\F_q^n$. We first note that $\F_q^n$ is equipped with a natural non-degenerate bilinear form $\F_q^n\times \F_q^n\rightarrow \F_q$ given by
\[ x\cdot y:=x_1y_1+\dots+x_ny_n\ \ \ \textup{for~}x, y\in\F_q^n. \]
We remark that this bilinear form is not necessarily an inner product. For example, if $n=p$ and $x=(1,\dots,1)\in\F_q^p$, then $x\cdot x=0$. However, we can still use this bilinear form to define ``orthogonality", even though some non-zero vector may be orthogonal to itself.

\begin{definition}
    We say two vectors $x,y\in\F_q^n$ are orthogonal, denoted by $x\perp y$, if $x\cdot y=0$. For $V\subset \F_q^n$ being a $k$-dimensional subspace whose directions are spanned by the vectors $v_1,\dots,v_k$, we say $x$ is orthogonal to $V$, denoted by $x\perp V$, if $x\perp v_1,\dots,v_k$. We also define the orthogonal complement of $V$ to be
    \[ V^\perp:=\{x\in\F_q^n: x\perp V\}. \]
\end{definition}
\begin{remark}
    {\rm
    By linear algebra, we see that $V^\perp$ is an $(n-k)$-dimensional subspace. Later we will see that $V^\perp$ is exactly the dual of $V$ which is defined in Definition \ref{defdual}.
    }
\end{remark} 

Since in $\F_q^n$ some vector may be orthogonal to itself, we need a slightly trickier definition of the orthogonal projection. Recall that for $V\in G(k,\R^n)$, $\pi_V: \R^n\rightarrow V$ is the orthogonal projection onto $V$. We can also identify $\pi_V$ as the map
\begin{equation}
     \R^n\rightarrow A(n-k,\R^n),
\end{equation} 
so that $\pi_V(x)$ is the unique $(n-k)$-dimensional plane that is parallel to $V^\perp$ and passes through $x$. This motivates the definition of projection in finite fields.

\begin{definition}\label{defpiv}
Let $\F_q$ be a finite field. Denote the $k$-dimensional subspaces and $k$-dimensional affine subspaces of $\Fq^n$ by $G(k,\F^n_q)$ and $A(k,\Fq^n)$, respectively. For $V\in G(k,\Fq^n)$, define
\begin{equation}\label{defpiveq}
    \pi_V: \Fq^n\rightarrow A(n-k,\F_q^n),
\end{equation} 
so that $\pi_V(x)$ is the unique element in $A(n-k,\F_q^n)$ that is parallel to $V^\perp$ and passes through $x$. 
\end{definition}

\section{Examples of the exceptional sets in the prime field}\label{section3}

We discuss some examples of the exceptional sets. In $\R^2$, it is conjectured that (see Section 5.4 in \cite{mattila2015fourier}):

\begin{conjecture}\label{conj1}
Let $A\subset \R^2$ with $\dim(A)=a$. For $\theta\in G(1,\R^2)$, let $\pi_\theta: \R^2\rightarrow \theta$ be the orthogonal projection onto line $\theta$. For $0<s<\max\{1,a\}$, define $E_s(A):=\{\theta: \dim(\pi_\theta(A))<s\}$. Then
\[\dim(E_s(A))\le \max\{0,2s-a\}.\]
\end{conjecture}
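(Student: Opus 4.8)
The plan is to split the target bound $\max\{0,2s-a\}$ into its two regimes.

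\emph{The regime $s\le a/2$.} Here the claim is simply $\dim(E_s(A))=0$. Since $E_s(A)\subseteq E_{a/2}(A)$ whenever $s\le a/2$, it suffices to treat $s=a/2$, and $\dim(E_{a/2}(A))\le 0$ is precisely the $n=2$, $k=1$ case of He's estimate \eqref{3}. So this regime needs nothing new.

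\emph{The regime $s>a/2$.} Here one wants $\dim(E_s(A))\le 2s-a$, and the route I would take is to pass, via point--line duality, to a Furstenberg-type incidence problem in the plane. Heuristically: if $\dim(E_s(A))=u$, then after fixing Frostman measures on $A$ (exponent $\approx a$) and on $E_s(A)$ (exponent $\approx u$) and discretizing at a scale $\delta$, one has $\approx\delta^{-a}$ points $P$, $\approx\delta^{-u}$ exceptional directions, and for each such direction a family of $\approx\delta^{-s}$ parallel $\delta$-tubes covering $P$; thus every point of $P$ lies in $\gtrsim\delta^{-u}$ of the $\lesssim\delta^{-(s+u)}$ tubes used. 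Dualizing points to lines and tubes to points and running a Szemer\'edi--Trotter count already forces $u\le 2s-a$ at this single scale. The real content of the conjecture is that the heuristic survives the passage to Hausdorff dimension, and making it rigorous requires the multi-scale technology developed for the planar Furstenberg set problem --- whose sharp form, the bound $\dim F\ge\min\{s+u,\tfrac{3s+u}{2},s+1\}$ of Ren--Wang (after Orponen--Shmerkin and others), is what one would invoke.

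\textbf{The main obstacle.} Everything hard sits in the regime $s>a/2$, and it is the difficulty at the heart of the planar Furstenberg problem. The hypothesis ``$\dim(\Pi_\theta(A))<s$'' controls only Hausdorff dimension, so the scales at which $\Pi_\theta(A)$ admits an efficient ($\approx\delta^{-s}$) cover depend on $\theta$ --- in particular $\overline{\dim}_B(\Pi_\theta(A))$ can exceed $s$ --- and there need be no single scale $\delta$ that serves a $u$-dimensional family of directions simultaneously. At a fixed scale the incidence estimate above is elementary; genuinely reconciling the scales, i.e.\ replacing the single-scale count by an induction on scales, is the crux, and is where one must import the multi-scale machinery for Furstenberg sets. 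This is presumably why the statement is posed here as a conjecture; and note that even the weaker finite-field bound of Theorem \ref{mainthm}, in the case $n=2$, $k=1$, does not transfer to $\R^2$ --- for the Fubini-type reason described in the introduction --- so recovering the sharp Euclidean exponent $2s-a$ is strictly harder still.
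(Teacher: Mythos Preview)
There is nothing in the paper to compare against: the statement is labelled \textbf{Conjecture} and the paper offers no proof, only the sharp example (Example~\ref{ex1}) showing the bound cannot be improved. You evidently recognise this yourself, since your ``proposal'' is a road map rather than an argument and you flag the main obstacle explicitly.

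On the content of your sketch: the regime $s\le a/2$ is indeed already settled by He's estimate \eqref{3} with $n=2$, $k=1$, exactly as you say. For $s>a/2$, the reduction via point--line duality to the Furstenberg set problem is the standard and correct link, and your identification of the scale-alignment issue as the crux is accurate. One remark: if you are genuinely willing to invoke the Ren--Wang sharp Furstenberg bound as a black box, then the conjecture \emph{does} follow --- that equivalence (Oberlin's duality between exceptional projections and Furstenberg sets) is well known --- so your sketch is closer to a proof than you present it. What remains heuristic in your write-up is the passage from the dimensional hypothesis on $E_s(A)$ to a usable $(\delta,u)$-configuration of directions with uniformly $s$-small projections; that step is routine once one commits to the discretised formulation, and is not where the difficulty lies.

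In short: the paper gives no proof, your outline points at the right ingredients, and the only ``gap'' is that you stop at a sketch rather than writing out the (by now standard) duality reduction to the Furstenberg theorem.
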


Of course, we can also ask the question in $\Fp^2$:
\begin{conjecture}\label{conj1.2}
Let $A\subset \Fp^2$ with $\#A=p^a$. For $0<s<\max\{1,a\}$, define $ E_s(A)$ as in \eqref{defex} with $n=2,k=1$. Then
\begin{equation}\label{conj2ineq}
    \#E_s(A) \le C_{\e,a,s} p^{\e+\max\{0,2s-a\}}.
\end{equation}
\end{conjecture}

\begin{remark}
   \rm \eqref{conj2ineq} may not be true for $\F_q$ because of the existence of the subfield. The existence of the subfield causes the Szemer\'edi-Trotter theorem to fail in $\Fq^2$ (see the Introduction of \cite{mohammadi2018szemer}). However, it is reasonable to believe \eqref{conj2ineq} is true over the prime field.
\end{remark}

We show that the upper bound in \eqref{conj2ineq} can be attained.

\begin{example}\label{ex1}
{\rm
Let $p$ be a large prime. We assume $\frac{a}{2}<s<\max\{1,a\}$, $a<2$.  Consider a set of lines
$\cL=\{l_{k,m}: |k|\le p^{2s-a}, |m|\le 10 p^s\}$ in $\Fp^2$. Here, $k$ and $m$ are integers and $l_{k,m}$ is given by
\[ l_{k.m}: y=kx+m. \]
We used the convention that if an integer $m$ satisfies $|m|<p/2$, then $m$ can be naturally viewed as an element in $\Fp$. Hence, the $l_{k,m}$ defined above is a line in $\Fp^2$. 

We see that $\cL$ consists of lines from $\sim p^{2s-a}$ many directions, and in each of these directions there are $\sim p^s$ many lines. We denote these directions by $E$, and for $\theta\in E$, let $\cL_\theta$ be the lines in $\cL$ that are in direction $\theta$.

Consider the set $A:=\{(x,y)\in \Fp^2: |x|\le p^{a-s}, |y|\le p^s\}$. For any $|k|\le p^{2s-a}$, we see that every $(x,y)\in A$ satisfies $|y-kx|\le 10p^s$. This means that for any direction $\theta\in E$, $A$ is covered by $\cL_\theta$. Therefore we have for each $\theta\in E$,
\[ \#\pi_\theta(A)\le \#\cL_\theta\lesssim p^s. \] We obtain the following estimate:
\[ \#\{\theta: \#\pi_\theta(A)\lesssim p^s\}\ge \#E\sim p^{2s-a}. \]

Therefore, we showed that under the setting of Conjecture \ref{conj1.2}, there exists $A$ such that 
\[ \# E_s(A)\gtrsim p^{\max\{0,2s-a\}}. \]

}

\end{example}

From now on, we denote such sets $A$ and $E$ by $\bdA_{s,a}(\subset \Fp^2)$ and $\bdE_{s,a}(\subset G(1,\Fp^2))$. They serve as a tight example for \eqref{conj2ineq}. Later, we will use $\bdA_{s,a}$ and $\bdE_{s,a}$ as building blocks to build more examples in higher dimensions.

\medskip

For simplicity, we define $t(a,s)$ to be the smallest number such that for any $A\subset \Fp^n$ with $\#A=p^a$,
\[ \#E_s(A)\lesssim_{n,k,a,s,\e}p^{\e+t(a,s)}, \]
for any $\e>0$.
Equivalently,
\[ t(a,s)=\overline{\lim_{p\rightarrow\infty}}\sup_{A\subset \Fp^n, \#A=p^a}\log_p(\#E_s(A)). \]

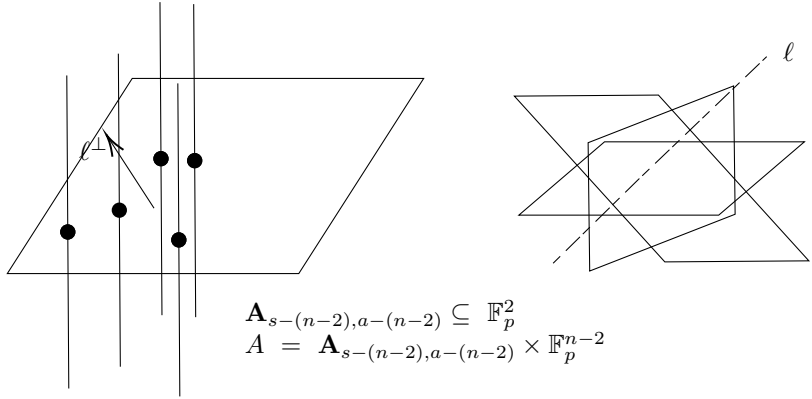
\begin{figure}
\begin{tikzpicture}[x=0.75pt,y=0.75pt,yscale=-1,xscale=1]

\draw  [color={rgb, 255:red, 0; green, 0; blue, 0 }  ,draw opacity=1 ] (163,71.5) -- (310,71.5) -- (247,170) -- (100,170) -- cycle ;
\draw [color={rgb, 255:red, 0; green, 0; blue, 0 }  ,draw opacity=1 ]   (130,70) -- (131,228) ;
\draw [color={rgb, 255:red, 0; green, 0; blue, 0 }  ,draw opacity=1 ]   (156,59) -- (157,217) ;
\draw [color={rgb, 255:red, 0; green, 0; blue, 0 }  ,draw opacity=1 ]   (177,33) -- (178,191) ;
\draw [color={rgb, 255:red, 0; green, 0; blue, 0 }  ,draw opacity=1 ]   (194,34) -- (195,192) ;
\draw [color={rgb, 255:red, 0; green, 0; blue, 0 }  ,draw opacity=1 ]   (186,74) -- (187,232) ;
\draw [color={rgb, 255:red, 0; green, 0; blue, 0 }  ,draw opacity=1 ] (174,137) -- (150,100);


\draw [shift={(150,100)}, rotate = 58] [color={rgb, 255:red, 0; green, 0; blue, 0 }  ,draw opacity=1 ][line width=0.75]    (10.93,-3.29) .. controls (6.95,-1.4) and (3.31,-0.3) .. (0,0) .. controls (3.31,0.3) and (6.95,1.4) .. (10.93,3.29)   ;

\draw [color={rgb, 255:red, 0; green, 0; blue, 0 }  ,draw opacity=1 ]   (130.5,149) ;
\draw [shift={(130.5,149)}, rotate = 0] [color={rgb, 255:red, 0; green, 0; blue, 0 }  ,draw opacity=1 ][fill={rgb, 255:red, 0; green, 0; blue, 0 }  ,fill opacity=1 ][line width=0.75]      (0, 0) circle [x radius= 3.35, y radius= 3.35]   ;
\draw [shift={(130.5,149)}, rotate = 0] [color={rgb, 255:red, 0; green, 0; blue, 0 }  ,draw opacity=1 ][fill={rgb, 255:red, 0; green, 0; blue, 0 }  ,fill opacity=1 ][line width=0.75]      (0, 0) circle [x radius= 3.35, y radius= 3.35]   ;
\draw [color={rgb, 255:red, 0; green, 0; blue, 0 }  ,draw opacity=1 ]   (156.5,138) ;
\draw [shift={(156.5,138)}, rotate = 0] [color={rgb, 255:red, 0; green, 0; blue, 0 }  ,draw opacity=1 ][fill={rgb, 255:red, 0; green, 0; blue, 0 }  ,fill opacity=1 ][line width=0.75]      (0, 0) circle [x radius= 3.35, y radius= 3.35]   ;
 
\draw [color={rgb, 255:red, 0; green, 0; blue, 0 }  ,draw opacity=1 ]   (177.5,112) ;
\draw [shift={(177.5,112)}, rotate = 0] [color={rgb, 255:red, 0; green, 0; blue, 0 }  ,draw opacity=1 ][fill={rgb, 255:red, 0; green, 0; blue, 0 }  ,fill opacity=1 ][line width=0.75]      (0, 0) circle [x radius= 3.35, y radius= 3.35]   ;
\draw [shift={(177.5,112)}, rotate = 0] [color={rgb, 255:red, 0; green, 0; blue, 0 }  ,draw opacity=1 ][fill={rgb, 255:red, 0; green, 0; blue, 0 }  ,fill opacity=1 ][line width=0.75]      (0, 0) circle [x radius= 3.35, y radius= 3.35]   ;
\draw [color={rgb, 255:red, 0; green, 0; blue, 0 }  ,draw opacity=1 ]   (194.5,113) ;
\draw [shift={(194.5,113)}, rotate = 0] [color={rgb, 255:red, 0; green, 0; blue, 0 }  ,draw opacity=1 ][fill={rgb, 255:red, 0; green, 0; blue, 0 }  ,fill opacity=1 ][line width=0.75]      (0, 0) circle [x radius= 3.35, y radius= 3.35]   ;
\draw [shift={(194.5,113)}, rotate = 0] [color={rgb, 255:red, 0; green, 0; blue, 0 }  ,draw opacity=1 ][fill={rgb, 255:red, 0; green, 0; blue, 0 }  ,fill opacity=1 ][line width=0.75]      (0, 0) circle [x radius= 3.35, y radius= 3.35]   ;
\draw [color={rgb, 255:red, 0; green, 0; blue, 0 }  ,draw opacity=1 ]   (186.5,153) ;
\draw [shift={(186.5,153)}, rotate = 0] [color={rgb, 255:red, 0; green, 0; blue, 0 }  ,draw opacity=1 ][fill={rgb, 255:red, 0; green, 0; blue, 0 }  ,fill opacity=1 ][line width=0.75]      (0, 0) circle [x radius= 3.35, y radius= 3.35]   ;
\draw [shift={(186.5,153)}, rotate = 0] [color={rgb, 255:red, 0; green, 0; blue, 0 }  ,draw opacity=1 ][fill={rgb, 255:red, 0; green, 0; blue, 0 }  ,fill opacity=1 ][line width=0.75]      (0, 0) circle [x radius= 3.35, y radius= 3.35]   ;
\draw   (392.93,103.95) -- (393.72,168.71) -- (467.07,140.05) -- (466.28,75.29) -- cycle ;
\draw   (458.8,140.5) -- (358,140.5) -- (401.2,103.5) -- (502,103.5) -- cycle ;
\draw   (431.61,164.1) -- (355.74,80.43) -- (428.39,79.9) -- (504.26,163.57) -- cycle ;
\draw  [dash pattern={on 3.75pt off 3pt on 7.5pt off 1.5pt}]  (375.28,164.79) -- (418.12,122.66) -- (483,60.5) ;

\draw (212,181.4) node [anchor=north west][inner sep=0.75pt]    {$ \begin{array}{l}
\bdA_{s-(n-2),a-(n-2)}\subseteq \ \Fp^{2}\\
A\ =\ \bdA_{s-(n-2),a-(n-2)}\times \Fp^{n-2}
\end{array}$};
\draw (135,100) node [anchor=north west][inner sep=0.75pt]    {$\ell^\perp $};
\draw (490,52.4) node [anchor=north west][inner sep=0.75pt]    {$\ell$};
\end{tikzpicture}

\caption{Projection to planes}
\label{projectiontoplane}
\end{figure}

\subsection{Sharpness of Theorem \ref{spethm}}
In this subsection, we discuss how sharp our Theorem \ref{mainthm} is.

Consider the case when $k=n-1, a\ge n-1$. Theorem \ref{spethm} gives the upper bound:
\[   t(a,s)\le n-2,\ \textup{when~}s<\frac{a+n-2}{2}. \]
On the other hand, we can construct examples to show that
\[ t(a,s)\ge n-2,\ \textup{when~}a\ge n-1,\ a-1<s<n-1. \]

Let $A=\Fp^{n-1}\times I$ where $I\subset \Fp$ with $\# I=\lfloor p^{a-(n-1)}\rfloor$. We want to see what is $E_s(A)$. For $V\in G(n-1,\Fp^n)$, if $(0,\dots,0,1)\notin V$, then $\pi_V$ restricts to an isomorphism on $\Fp^{n-1}\times \{0\}$ \[\pi_V: \Fp^{n-1}\times \{0\}\xrightarrow{\sim} V.\]
Therefore, $\#\pi_V(A)=\#\Fp^{n-1}=p^{n-1}>p^s$ which implies $V\notin E_s(A)$. If $(0,\dots,0,1)\in V$, then $\pi_V(A)=\F_p^{n-2}\times I$. Therefore, $\#\pi_V(A)=\#\F_p^{n-2}\#I\le p^{a-1}<p^s$, which implies $V\in E_s(A)$. We have that $E_s(A)= \{V\in G(n-1,\Fp^n): (0,\dots,0,1)\in V \}\cong G(n-2,\Fp^{n-1})$, which implies 
\[ \#E_s(A)\sim p^{n-2}. \]
This shows that when $s\in(a-1,\frac{a+n-2}{2})$, in order for \eqref{mainthmest} to hold, $(s,t(a,s))$ must lie above the graph $A-B-C$ in Figure \ref{grph}.

Also, our estimate is sharp in another sense: the range of $s$ is sharp. Actually, we show that if $s>\frac{a+n-2}{2}$, then
\[ t(a,s)\ge 2s-a(>n-2). \]
In other words, in order for \eqref{mainthmest} to hold, $(s,t(a,s))$ must lie above the graph $C-D$ in Figure \ref{grph}.

Choose $A=\bdA_{s-(n-2),a-(n-2)}\times \Fp^{n-2}$ (see Figure \ref{projectiontoplane}). We first look at those $(n-1)$-subspaces $V$ that contains $\{(0,0)\}\times\Fp^{n-2}$. Such $V$ can be written as $\ell\times \Fp^{n-2}$, where $\ell\in G(1,\Fp^2)$. 
It is not hard to see that $\pi_V(A)=\pi_{\ell}(\bdA_{s-(n-2),a-(n-2)})\times \Fp^{n-2}$ (where we view $\pi_\ell:\Fp^2\rightarrow \ell$).
Therefore, we obtain that
\[ \#\pi_V(A)=\#\Fp^{n-2} \#\pi_\ell(\bdA_{s-(n-2),a-(n-2)}). \]
If $\ell\in \bdE_{s-(n-2),a-(n-2)}$, then $\#\pi_\ell(\bdA_{s-(n-2),a-(n-2)})<p^{s-(n-2)}$, and hence
\[ \#\pi_V(A)< p^{s}.  \]
To indicate the relation between $V$ and $\ell$, we denote $V_\ell=\ell\times \Fp^{n-2}$. We have shown that if $\ell\in \bdE_{s-(n-2),a-(n-2)}$ then 
\[ V_\ell \in E_s(A). \]

Let $\ell^\perp\subset \Fp^2$ be the line orthogonal to $\ell$. By abuse of notation, we also use $\ell^\perp$ to denote the vector in $\Fp^2$.
Consider another $(n-1)$-subspace $W$ whose normal direction is of form $\ell^\perp+y$ for $y\in \{(0,0)\}\times \Fp^{n-2}$ (see right hand side of Figure \ref{projectiontoplane} for such $W$'s). We remark that $\ell^\perp$ is the normal direction of $V_\ell$. We claim that $\#\pi_{V_\ell}(A)=\#\pi_W(A)$. Note that $\#\pi_{V_l}(A)$ is the number of lines parallel to $\ell^\perp$ that are needed to cover $A$. It is equal to the number of lines parallel to $\ell^\perp+y$ that are needed to cover $A$. Based on one $V_\ell$, we find another $\sim p^{n-2}$ many $W$'s that are in $E_s(A)$. Therefore,
\[ \#E_s(A)\gtrsim \#\bdE_{s-(n-2),a-(n-2)} p^{n-2}\gtrsim p^{2\big(s-(n-2)\big)-\big(a-(n-2)\big)+n-2}=p^{2s-a}. \]

\section{Fourier transform in finite field}\label{section4}

\subsection{Definition of Fourier transform}
We briefly introduce the Fourier transform in $\mathbb{F}_q^n$. We first set up our notation. $\Fq^n$ is our physical space, and we use $x,y$ to denote the points in $\Fq^n$. The frequency space is also $\Fq^n$, and we use $\xi,\eta$ to denote the points in it. For $x\in\Fq^n$ or $\xi\in\Fq^n$, we also write $x=(x_1,\dots,x_n)$ or $\xi=(\xi_1,\dots,\xi_n)$ in coordinate, where each $x_i$ or $\xi_i$ belongs to $\Fq$.

Before giving the definition of the Fourier transform, we need to introduce some notation from number theory.
Recall that $q=p^r$. Define the trace map
\[ \Tr: \Fq\rightarrow \Fp,\ \ x\mapsto \sum_{i=0}^{r-1} x^{p^i}. \]
First, we need to explain why $\Tr(x)\in \Fp$. Let 
\[\si: \Fq\rightarrow \Fq,\ \  x\mapsto x^p\] 
be the Frobenius map. Then we can write $\Tr=\sum_{i=0}^{r-1}\si^i$. By a fundamental fact in number theory, we know that the Galois group $\textup{Gal}(\F_q/\F_p)$ is generated by $\si$. Since $\si(\Tr(x))=\Tr(x)$, we see that $\Tr(x)$ is invariant under the Galois group, and hence $\Tr(x)\in\Fp$.
Another two important properties for $\Tr$ is that $\Tr$ is $\Fp$-linear, i.e., $\Tr(x+y)=\Tr(x)+\Tr(y)$ and $\Tr(\lambda x)=\lambda \Tr(x)$ for $\lambda\in\Fp$.
\bigskip

We are ready to define the Fourier transform on $\Fq^n$.
For a function $f(x)$ on $\mathbb{F}_q^n$, the Fourier transform of $f$ is a function on the frequency space $\Fq^n$ given by
\[
\wh{f} (\xi) = \sum_{x\in \mathbb{F}_q^n} f(x) e_p (-\Tr(x\cdot \xi)).
\]
Here, $e_p(x) = e^{\frac{2\pi i x}{p}}$, and we view $\Tr(x\cdot\xi)$ as an element in $\{1,2,\dots,p\}$. 
For a function $g(\xi)$ on $\Fq^n$, the inverse Fourier transform of $g$ is a function on $\Fq^n$ given by 
\[
g^{\vee}(x) = \frac{1}{q^n} \sum_{\xi \in \Fq^n} g(\xi) e_p(\Tr(x\cdot \xi)).
\] 

We will prove the Fourier inversion theorem and the Plancherel's identity.

\begin{lemma}\label{lemfin}
    For $x\in\Fq$, we have 
    \[ \sum_{\xi\in\Fq}e_p(\Tr(x\xi))=q \cdot 1_{x=0}. \]
\end{lemma}

\begin{proof}
    When $x=0$, then the left hand side equals $q$. When $x\neq 0$, the left hand side equals
\begin{align*}
\sum_{\xi\in\Fq}e_p(\Tr(\xi)).
    \end{align*}
We just need to show that for any $y\in \Fp$, the number of $\xi$ such that $\Tr(\xi)=y$ are all the same. Then we have    
\begin{align*}
\sum_{\xi\in\Fq}e_p(\Tr(\xi))=\frac{q}{p}\sum_{y\in\Fp}e_p(y)=0.
    \end{align*}
To calculate $\#\{\xi\in\Fq: \Tr(\xi)=y\}$, we first find a $\xi_0\in\Fq$ such that $\Tr(\xi_0)=1$. Note that
\[ \Tr(\xi)=\sum_{i=0}^{r-1}\xi^{p^i} \]
is a polynomial of degree $p^{r-1}$. Therefore, there exists $\xi_1\in\Fq$ such that $\Tr(\xi_1)\neq 0$. Choosing $\xi_0=\Tr(\xi_1)^{-1}\xi_1$ and noting that $\Tr(\xi_1)^{p^i}=\Tr(\xi_1)$ because of $\Tr(\xi_1)\in\Fp$, we have
\[ \Tr(\xi_0)=\sum_{i=0}^{r-1}(\Tr(\xi_1)^{-1}\xi_1)^{p^i}=\Tr(\xi_1)^{-1}\sum_{i=0}^{r-1}\xi_1^{p^i}=1. \]

Now we can see that for any $y,y'\in\Fp$, if  $\Tr(\xi)=y$, then by the $\F_p$-linearity of $\Tr$, $\Tr(\xi+(y-y')\xi_0)=y'$. This shows that $\#\{\xi\in\Fq: \Tr(\xi)=y\}$ are the same for all $y\in\Fp$.
\end{proof}

\begin{lemma}[Fourier inversion]
    If $f$ is a function on $\Fq^n$, then $(\wh f)^\vee=f$.
\end{lemma}
\begin{proof}
  By definition,
  \begin{align*}
      (\wh f)^\vee(x)&=\frac{1}{q^n} \sum_{\xi\in\Fq^n} \sum_{y\in\Fq^n} f(y)  e_p(-\Tr(y\cdot\xi)) e_p(\Tr(x\cdot\xi))\\
      &=\frac{1}{q^n} \sum_{y\in\Fq^n}\sum_{\xi\in\Fq^n}  f(y)  e_p\bigg(\Tr\Big((x-y)\cdot\xi\Big)\bigg)\\
      &=\frac{1}{q^n}\sum_{y\in\Fq^n}\sum_{\xi\in\Fq^n}  f(y)  \prod_{j=0}^n e_p\bigg(\Tr\Big((x_j-y_j)\xi_j\Big)\bigg)
  \end{align*} 
By Lemma \ref{lemfin}, it equals to
\[ \frac{1}{q^n}\sum_{y\in\Fq^n}f(y)\prod_{j=0}^n q 1_{y_j=x_j}=f(x).  \]
\end{proof}

\begin{lemma}[Plancherel's identity]
\[ \sum_{x\in\Fq^n}|f(x)|^2=\frac{1}{q^n}\sum_{\xi\in\Fq^n}|\wh f(\xi)|^2. \]
\end{lemma}
\begin{proof}
    \begin{align*}
        \frac{1}{q^n}\sum_{\xi\in \Fq^n}|\wh f(\xi)|^2&= \frac{1}{q^n} \sum_{\xi\in\Fq^n}\left| \sum_{x\in\Fq^n} f(x) e_p(-\Tr(x\cdot\xi)) \right|^2\\
        &=\frac{1}{q^n} \sum_{\xi\in\Fq^n} \sum_{x\in\Fq^n}\sum_{x'\in\Fq^n} f(x)\overline{ f(x')} e_p(\Tr((x'-x)\cdot\xi))\\
        &=\sum_{x\in\Fq^n}|f(x)|^2.
    \end{align*}
\end{proof}

\subsection{Dual space}

\begin{definition}[Dual space]\label{defdual}
    For $V\in G(k,\Fq^n)$, we define $V^*=\supp \wh \Id_V$. 
\end{definition}

The intuition in $\R^n$ is that: if $V\in G(k,\R^n)$, then $\supp\wh \Id_V=V^\perp$ (viewing $\Id_V$ as a distribution). Therefore, the dual space of $V$ is $V^\perp\in G(n-k,\R^n)$. We will show that for finite field, it is also true that $V^*=V^\perp$.

\begin{lemma}\label{lem1}

    If $V\in G(k,\Fq^n)$, then $V^*=V^\perp$. Moreover, $\wh \Id_V=q^k \Id_{V^*}$.
\end{lemma}

\begin{proof}
    Suppose the $k$-dimensional space $V$ is spanned by the following $k$ vectors: 
    \begin{equation*}
      \bv_1=(v_{11},\cdots,v_{1n}),
      \bv_2=(v_{21},\cdots,v_{2n}), \cdots
      \bv_k=(v_{k1},\cdots,v_{kn}).
\end{equation*}
We use $\cV$ to denote the $k\times n$ matrix 
\[ \cV=\begin{pmatrix}
    \bv_1\\
    \bv_2\\
    \vdots\\
    \bv_k
\end{pmatrix}. \]
Therefore $V$ can be written as
\begin{equation*}
    V=\{ (y_1,\dots,y_k)\cV : y_1,\dots,y_k\in\Fq \}.
\end{equation*}

We will calculate $\wh \Id_V$. By definition
\begin{equation}\label{tocalculate}
    \wh \Id_V= \sum_{x\in V} e_p(-\Tr(x\cdot\xi)) = \sum_{y_1,\dots,y_k\in\Fq} e_p(-\Tr\bigg((y_1,\dots,y_k)\cV \begin{pmatrix}
        \xi_1\\
        \vdots\\
        \xi_n
    \end{pmatrix}\bigg)). 
\end{equation}
To calculate the right hand side, we first choose $\bv_{k+1},\dots,\bv_{n}$, so that $\{\bv_1,\dots,\bv_n\}$ form a basis of $\Fq^n$. Define 
\[ \cW=\begin{pmatrix}
    \cV\\
    \bv_{k+1}\\
    \vdots\\
    \bv_n
\end{pmatrix}=\begin{pmatrix}
    \bv_1\\
    \vdots\\
    \bv_n
\end{pmatrix},\]
which is invertible. We can write the right hand side of \eqref{tocalculate} as
\begin{equation}
    \sum_{y_1,\dots,y_k\in\Fq} e_p(-\Tr\bigg((y_1,\dots,y_k,0,\dots,0)\cW \begin{pmatrix}
        \xi_1\\
        \vdots\\
        \xi_n
    \end{pmatrix}\bigg)).
\end{equation}
By Lemma \ref{lemfin}, we see that this sum $=q^k$, if $\cW \begin{pmatrix}
        \xi_1\\
        \vdots\\
        \xi_n
    \end{pmatrix}\in \{0\}^k\times \Fq^{n-k}$; and $=0$ otherwise. Therefore,
\[ V^*= \cW^{-1} (\{0\}^k\times \Fq^{n-k}) \]
is an $(n-k)$-dimensional subspace, and
\[ \wh \Id_V=q^k \cdot \Id_{V^*}. \]

To show $V^*=V^\perp$, we just need to check any vector $\cW^{-1}(0,\dots,0,a_1,\dots,a_{n-k})^T\in  \cW^{-1}(\{0\}^k\times \Fq^{n-k})$ is orthogonal to any $\bv_i$ ($1\le i\le k$). In other words,
\[ \bv_i \cW^{-1}\begin{pmatrix}
    0\\
    \vdots\\
    0\\
    a_1\\
    \vdots\\
    a_{n-k}
\end{pmatrix}=0. \]
This is true since $\bv_i \cW^{-1}=(0,\dots,0,1,0\dots,0)$ where the $i$-th entry is $1$.
\end{proof}

It is also not hard to see the following results, for which we omit the proof.
\begin{lemma}\label{lem2}
    If $V\in G(k,\Fq^n)$, then $(V^*)^*=V$. Therefore, $(\cdot)^*:G(k,\Fq^n)\rightarrow G(n-k,\Fq^n)$ is a bijection.
\end{lemma}

\begin{lemma}\label{lem3}
    For two subspaces $V,W$ in $\Fq^n$, we have $V\subset W\Leftrightarrow W^*\subset V^*$.
\end{lemma}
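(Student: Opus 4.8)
The plan is to reduce the statement to the standard-bilinear-form description of the dual space that is already implicit in the proof of Lemma~\ref{lem1}, namely
\[ V^* = \{\xi \in (\Fp^n)^* : x\cdot \xi = 0 \text{ for all } x\in V\}. \]
Once this is in hand, the equivalence $V\subset W \Leftrightarrow W^*\subset V^*$ is essentially formal, using also the involution property $(V^*)^* = V$ established above.

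First I would record the characterization. Choosing a basis $\bv_1,\dots,\bv_k$ of $V$ and completing it to a basis of $\Fp^n$ exactly as in the proof of Lemma~\ref{lem1}, the computation there shows that $\wh\Id_V(\xi)=\sum_{x\in V}e_p(-x\cdot\xi)$ equals $p^k$ when $\bv_i\cdot\xi=0$ for every $i\le k$ and equals $0$ otherwise. Since $\bv_1,\dots,\bv_k$ span $V$, the conditions $\bv_i\cdot\xi=0$ for $i\le k$ hold simultaneously if and only if $x\cdot\xi=0$ for all $x\in V$; hence $\xi\in V^*=\supp\wh\Id_V$ precisely when $\xi$ annihilates $V$ under the pairing, which is the displayed identity. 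The forward implication is then immediate: if $V\subset W$ and $\xi\in W^*$, then $x\cdot\xi=0$ for all $x\in W$, in particular for all $x\in V$, so $\xi\in V^*$; thus $W^*\subset V^*$.

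For the reverse implication, I would apply the forward implication to the inclusion $W^*\subset V^*$, which is an inclusion of subspaces of $(\Fp^n)^*$; since the bilinear pairing between $\Fp^n$ and $(\Fp^n)^*$ is symmetric, the characterization above applies with the two spaces interchanged, giving $(V^*)^*\subset (W^*)^*$, and then $(V^*)^*=V$, $(W^*)^*=W$ yield $V\subset W$. There is no real obstacle here; the only point requiring a moment's care is that the dual operation is genuinely symmetric between physical and frequency space, so that the forward step can be reused verbatim. Alternatively one can avoid invoking the forward step twice by observing that, via the characterization and the involution property, both $V\subset W$ and $W^*\subset V^*$ are equivalent to the single symmetric condition ``$x\cdot\xi=0$ for all $(x,\xi)\in V\times W^*$'', which makes the equivalence transparent.
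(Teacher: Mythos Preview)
Your argument is correct. The key step---identifying $V^*$ with the annihilator $\{\xi : x\cdot\xi = 0 \text{ for all } x\in V\}$ via the computation in Lemma~\ref{lem1}---is exactly the intended content, and once that is in hand the equivalence follows formally, either by applying the forward implication twice together with the involution $(V^*)^*=V$, or (as you also note) by observing that both sides are equivalent to the symmetric condition $x\cdot\xi=0$ for all $(x,\xi)\in V\times W^*$.

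The paper itself omits the proof of this lemma, so there is no approach to compare against; your write-up is a suitable fill-in.
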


We also need a key lemma about the Falconer-type exceptional estimate.

\begin{lemma}\label{lemfalconer}
Let $A\subset \Fq^n$ be a set with $\#A=q^a$ $(0<a<n)$. For $s\in (0,a)$, recall
\begin{align*}
    E_s(A)&=\{ V\in G(k,\Fq^n):\#\pi_V(A)<q^s \}\\
    &=\{ V^*\in G(n-k,\Fq^n):\#\{\textup{translated~copies~of~}V^*\textup{~to~cover~}A\}<q^s \}. 
\end{align*} 
Let $M$ be the overlapping number of $\{V\setminus\{0\}: V\in E_s(A)\}$, i.e.,
\[M:=\sup_{\xi\in \Fq^n\setminus \{0\}}\sum_{V\in E_s(A)}\Id_{V}(\xi).\]
Then
\begin{equation}\label{falconereq}
    \#E_s(A)\lesssim M q^{n-k+s-a}. 
\end{equation}
\end{lemma}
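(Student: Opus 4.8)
The plan is to run a standard $L^2$ / Fourier argument, exploiting the fact (from Lemma \ref{lem1}) that $\wh{\Id_V} = p^k \Id_{V^*}$, so that the dual subspaces $V^*$ are literally the Fourier supports of the indicator functions of the $V$'s. First I would pass from the projection $\pi_V(A)$ to a function on $\Fp^n$: for $V \in G(n-k,\Fp^n)$, write $A$ as a union of fibers of $\pi_V$, i.e. for each affine $(n-k)$-plane $H$ parallel to $V$ let $a_H = \#(A \cap H)$. Then $\sum_H a_H = \#A = p^a$, and the number of nonzero fibers is exactly $\#\pi_V(A) < p^s$. By Cauchy–Schwarz over the (at most $p^s$) nonempty fibers,
\[
p^{2a} = \Bigl(\sum_H a_H\Bigr)^2 \le \#\pi_V(A)\cdot \sum_H a_H^2 < p^s \sum_H a_H^2,
\]
so $\sum_H a_H^2 > p^{2a-s}$. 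The key observation is that $\sum_H a_H^2$ is a convolution-type quantity: it counts pairs $(x,y)\in A\times A$ with $x - y \in V$, i.e. $\sum_H a_H^2 = \sum_{x,y\in A} \Id_V(x-y) = (\Id_A * \Id_A^{\sim})(0)$-type expression, which by Plancherel equals $p^{-n}\sum_{\xi} |\wh{\Id_A}(\xi)|^2 \wh{\Id_V}(\xi) = p^{k-n}\sum_{\xi \in V^*} |\wh{\Id_A}(\xi)|^2$ using $\wh{\Id_V} = p^k\Id_{V^*}$.

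Next I would sum this over all $V \in E_s(A)$. Separating the $\xi = 0$ term (where $\wh{\Id_A}(0) = \#A = p^a$, contributing $p^{k-n}\cdot p^{2a}$ to each summand) from $\xi \ne 0$, we get for each $V \in E_s(A)$
\[
p^{2a-s} < p^{k-n}\Bigl(p^{2a} + \sum_{\xi \in V^*\setminus\{0\}} |\wh{\Id_A}(\xi)|^2\Bigr).
\]
Under the hypothesis $s < a$ (more precisely once $p$ is large, since $p^{2a-s} \ge 2 p^{k-n+2a}$ would need $a - s \ge n-k$, which need not hold — so I should be careful here and instead just keep both terms). Summing over $V \in E_s(A)$ and using the overlap bound — each $\xi \ne 0$ lies in at most $M$ of the sets $V^*\setminus\{0\}$ — gives
\[
\#E_s(A)\cdot p^{2a-s} < p^{k-n}\Bigl(\#E_s(A)\cdot p^{2a} + M\sum_{\xi \ne 0}|\wh{\Id_A}(\xi)|^2\Bigr) \le p^{k-n}\#E_s(A)\cdot p^{2a} + p^{k-n}M\cdot p^n\cdot p^a,
\]
the last step by Plancherel: $\sum_\xi |\wh{\Id_A}(\xi)|^2 = p^n \sum_x |\Id_A(x)|^2 = p^n\cdot p^a$. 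Rearranging, $\#E_s(A)(p^{2a-s} - p^{k-n+2a}) < M p^{k+a}$, and since $s < \tfrac{a+2k-n}{2}$ forces... actually the cleaner route: the hypothesis in Lemma \ref{lemfalconer} only assumes $s \in (0,a)$, but the ambient theorem's range $s < \frac{a+2k-n}{2} \le k$ combined with $a \le n$ gives $2a - s > 2a - k \ge k + (a-k) \ge $ ... so I would verify $p^{2a-s} \ge 2p^{k-n+2a}$, equivalently $s \le n-k$, which follows from $s < \frac{a+2k-n}{2}$ and $a \le n$ since then $s < \frac{n+2k-n}{2} = k$ — hmm this gives $s<k$ not $s \le n-k$. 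So the $\xi=0$ term cannot simply be absorbed in general; instead I should note $s < \frac{a+2k-n}{2}$ rearranges to $2a - s > 2a - \frac{a+2k-n}{2} = \frac{3a - 2k + n}{2}$ and compare with $k - n + 2a$: we need $\frac{3a-2k+n}{2} \ge k-n+2a$, i.e. $3n - 4k \ge a$, which is false in general. The honest fix is to absorb using a constant: since $s < a$ we do NOT get the $\xi = 0$ term to be lower order, so I'll instead \emph{not} split and carry it, concluding $\#E_s(A) \lesssim M p^{n-k+s-a}$ only after the $\xi=0$ contribution is shown subdominant — which is exactly where the constraint $s < \frac{a+2k-n}{2}$ enters, via $2(a-s) > n - k$ wait that's $s < a - \frac{n-k}{2} = \frac{2a-n+k}{2}$, not quite $\frac{a+2k-n}{2}$; these differ by $\frac{a-k}{2}$.

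The main obstacle, therefore, is the bookkeeping around the $\xi = 0$ frequency: after Cauchy–Schwarz one has $p^{2a-s} < p^{k-n}\sum_{\xi\in V^*}|\wh{\Id_A}(\xi)|^2$ with the DC term $p^{2a+k-n}$ always present, and one must use the precise hypothesis on $s$ (together with $a \le n$, $s < k$) to show $p^{2a+k-n} \le \frac12 p^{2a-s}$, i.e. $s \le n-k$, so that it can be absorbed into the left side; then $p^{2a-s} \lesssim p^{k-n}\sum_{\xi\in V^*\setminus\{0\}}|\wh{\Id_A}(\xi)|^2$, and summing over $V\in E_s(A)$ with overlap $\le M$ plus Plancherel $\sum_\xi|\wh{\Id_A}(\xi)|^2 = p^{n+a}$ yields $\#E_s(A)\, p^{2a-s} \lesssim M p^{k-n}\cdot p^{n+a} = M p^{k+a}$, hence $\#E_s(A) \lesssim M p^{k+a-2a+s} = M p^{k-a+s}$. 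That last exponent is $k - a + s$, whereas the lemma claims $n - k + s - a$; reconciling these (they agree only when $n = 2k$) means I have the duality backwards somewhere — the cleanest resolution is that $\pi_V$ for $V \in G(n-k,\Fp^n)$ has $(n-k)$-dimensional fibers so $\Id_V$ should be $\Id$ of an $(n-k)$-plane, $\wh{\Id_V} = p^{n-k}\Id_{V^*}$ with $V^* \in G(k,\cdot)$, and redoing the computation with $k \leftrightarrow n-k$ gives exactly $\#E_s(A) \lesssim M p^{n-k+s-a}$ as claimed, with the side condition becoming $s \le k$, which \emph{is} guaranteed by $s < \frac{a+2k-n}{2} < k$. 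I would write the final proof in this corrected indexing: fibers of $\pi_V$ are $(n-k)$-dimensional, Cauchy–Schwarz gives $\sum_H a_H^2 > p^{2a-s}$, Plancherel plus $\wh{\Id_V} = p^{n-k}\Id_{V^*}$ converts this to a sum over $V^*$, the $\xi=0$ term is absorbed using $s < k$, and summing over $E_s(A)$ with overlap $M$ closes the estimate.
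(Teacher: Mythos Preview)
Your argument, once you straighten out the $k\leftrightarrow n-k$ indexing in the last paragraph, is correct: with $V\in G(n-k,\Fp^n)$ one has $\wh{\Id_V}=p^{n-k}\Id_{V^*}$, Cauchy--Schwarz on the fibers gives $\sum_H(\#(A\cap H))^2>p^{2a-s}$, Plancherel rewrites this as $p^{-k}\sum_{\xi\in V^*}|\wh{\Id_A}(\xi)|^2$, the $\xi=0$ term $p^{2a-k}$ is absorbed using $s<k$ (which, as you note, follows from $s<\frac{a+2k-n}{2}$ and $a\le n$, and is also what the paper silently uses), and summing over $E_s(A)$ with overlap $\le M$ plus $\sum_\xi|\wh{\Id_A}(\xi)|^2=p^{n+a}$ gives exactly $\#E_s(A)\lesssim Mp^{n-k+s-a}$.

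This is a genuinely different route from the paper. The paper runs the high--low method on the auxiliary function $f=\sum_{V\in E_s(A)}\sum_{W\in\pi_V(A)}\Id_W$: it bounds $\int_A f^2\ge p^a(\#E_s(A))^2$ from below, subtracts the constant (low-frequency) part of each $\Id_W$, and applies Plancherel to the mean-zero remainder, using that $\supp\bigl(\sum_{W\in\cL_V}(\Id_W-p^{-k})\bigr)^\wedge\subset V^*\setminus\{0\}$ to bring in the overlap $M$. In effect the paper puts the Fourier transform on the \emph{planes}, while you put it on the \emph{set} $A$; the two computations are dual and both collapse to the same inequality $s<k$ for handling the zero frequency. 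Your version is shorter and avoids introducing $f$ and the high--low decomposition entirely; the paper's version foregrounds the incidence-theoretic viewpoint (\`a la Vinh / Guth--Solomon--Wang) and the Fourier support structure of the covering planes, which is perhaps more suggestive for the later step (Lemma~\ref{lemma:hyper1}) where one restricts to $V^*$ containing a fixed line.
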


\begin{remark}
{\rm
    Noting that for $\xi\in\Fq^n\setminus \{0\}$, we have
    \[ \sum_{V\in E_s(A)}\Id_{V}(\xi)\le \sum_{V\in G(k,\Fq^n)}\Id_{V}(\xi)=\# \{ V\in G(k,\Fq^n): 0,\xi\in V \}. \]
    Denote the line passing through $0,\xi$ by $\ell_\xi$. Noting that $\ell_\xi\subset V\Leftrightarrow V^*\subset \ell_\xi^*$ (by Lemma \ref{lem3}), we see that the right hand side of the inequality above is equal to
    \[\#\{W\in G(n-k,\Fq^n):W\subset \ell_\xi^*\}=\#G(n-k,\ell_\xi^*)\sim q^{(k-1)(n-k)}. \]
    We obtain that $M\lesssim q^{(k-1)(n-k)}$. Plugging into \eqref{falconereq}, we obtain that
    \begin{equation}\label{eqremark}
        \# E_s(A)\lesssim q^{k(n-k)+s-a},
    \end{equation}
    which is the Falconer-type estimate \ref{2}.
    }
\end{remark}

\begin{proof}[Proof of Lemma \ref{lemfalconer}]
By definition, for each $V \in E_s(A)$, there exists a set of $(n-k)$-planes $\mathcal L_V (=\pi_V(A))$ parallel to $V^*$ such that $A\subset\bigcup_{W \in \mathcal L_V} W$. Furthermore, we have  $\# \mathcal L_V < q^s$.

Let 
\[
f = \sum_{V \in E_s(A)} \sum_{W \in \mathcal L_V} \Id_W.
\]
We will apply the high-low method to $f$ using the Fourier transform on $\Fq^n$. Denote $\# E_s(A) = q^t$ for simplicity. Then, notice that for every $a\in A$ and for every $V \in E_s(A)$, there exists a $W\in\cL_V$ containing $a$. Therefore, we have that 
\begin{equation}\label{lefthandside}
    q^a q^{2t} = \#A(\# E_s(A))^2 \leq \int_A f^2 = \int_A \left(\sum_{V \in E_s(A)} \sum_{W \in \mathcal L_V} \Id_W \right)^2.
\end{equation}

We now seek to find an upperbound for the right hand side, for which we use the high-low method. The idea of the high-low method originates from \cite{vinh2011szemeredi}, \cite{guth2019incidence}, and has recently been applied to solve many problems. We briefly explain the idea of high-low method in the finite field setting. For a function $f$ on $\Fq^n$, we want to decompose it into high part and low part:
\[f=f_h+f_l.\]
The ``high part" $f_h$ satisfies $0\notin\supp \wh f_h$; the ``low part" $f_l$ satisfies $\supp\wh f_l\subset \{0\}$. By the requirement of the high part and low part, we can see that \[f_l(x)=\big(\frac{1}{q^n}\int_{\Fq^n}f(x)dx\big) \Id_{\Fq^n}(x),\ \ \ f_h(x)=f(x)-\big(\frac{1}{q^n}\int_{\Fq^n}f(x)dx\big) \Id_{\Fq^n}(x). \]
The Fourier support condition on $f_h$ will give us more orthogonality, and hence more gains when we use $L^2$ estimate.

We come back to the proof.
Notice that
\[
\int_A \left(\sum_{V \in E_s(A)} \sum_{W \in \mathcal L_V} \Id_W \right)^2 \lesssim \int_A \left(\sum_{V \in E_s(A)} \sum_{W \in \mathcal L_V} \Id_W - \frac{1}{q^k} \right)^2 + \int_A \left(\sum_{V \in E_s(A)} \sum_{W \in \mathcal L_V} \frac{1}{q^k} \right)^2.
\]
We now show that the first term on the right hand side dominates. To see this, notice that 
\[
\int_A \left(\sum_{V \in E_s(A)} \sum_{W \in \mathcal L_V} \frac{1}{q^{k}} \right)^2 = \int_A \left(\#E_s(A)\cdot  \#\mathcal L_V \cdot \frac{1}{q^{k}}\right)^2 \leq q^a\cdot q^{2(s+t-k)}.
\]
Notice that this is much less than the left hand side of \eqref{lefthandside} since $s<k$ and we may assume $q$ is large enough (since for small $q$, \eqref{mainthmest} naturally holds by choosing large enough constant $C_{n,k,a,s}$). Therefore, we have that 
\begin{align*}
    q^{a+2t} &\lesssim \int_A \left(\sum_{V \in E_s(A)} \sum_{W \in \mathcal L_V} \Id_W - \frac{1}{q^{k}} \right)^2 \\
    &\le \int_{\Fq^n} \left(\sum_{V \in E_s(A)} \sum_{W \in \mathcal L_V} \Id_W - \frac{1}{q^{k}} \right)^2.
\end{align*}
We now apply the Fourier transform to the last integrand.

Since any $W\in\cL_V$ is a translation of $V^*$, we can write $W=x_W+V^*$ for some $x_W\in\Fq^n$.
By Lemma \ref{lem1}, we have that \[\wh\Id_W(\xi)= 
e_p(-\Tr(x_W\cdot \xi))\wh 1_{V^*}= q^{n-k} e_p(-\Tr(x_W\cdot\xi))\Id_{V}.\] We also simply note that $\wh \Id_{\Fq^n}=q^n\Id_{\{0\}}$. We have 
\[
\left(\Id_W - \frac{1}{q^{k}}\right)^\wedge\,\,(\xi) = q^{n-k}e_p(-x_W\cdot\xi)\Id_{V}(\xi) - q^{n-k}\cdot \Id_{\{0\}}(\xi).
\]
Therefore, we see that $\supp \left(\sum_{W\in\cL_V}(\Id_W - \frac{1}{q^{k}})\right)^\wedge\subset V\setminus\{0\}$. Applying Plancherel and noting the definition of $M$, we have
\begin{align*}
q^{a+2t}&\lesssim \frac{1}{q^n}\int_{\Fq^n}\left|  \sum_{V\in E_{s}(A)}\sum_{W\in\cL_V}(\Id_W-\frac{1}{q^k})^\wedge\right|^2\\
&\lesssim \frac{1}{q^n}M\sum_{V\in E_{s}(A)}\int_{\Fq^n}\left|  \sum_{W\in\cL_V}(\Id_W-\frac{1}{q^k})^\wedge\right|^2\\
&=M\sum_{V\in E_{s}(A)}\int_{\Fq^n}\left|  \sum_{W\in\cL_V}\Id_W-\frac{1}{q^k}\right|^2\\
&\lesssim M\sum_{V\in E_{s}(A)}\int_{\Fq^n}  (\sum_{W\in\cL_V}\Id_W)^2+\int_{\Fq^n}(\#\cL_V)^2\frac{1}{q^{2k}}.
\end{align*}
Noting that $(\sum_{W\in\cL_V}\Id_W)^2=\sum_{W\in\cL_V}\Id_W$ (as $\{\Id_W\}_{W\in\cL_V}$ are disjoint), $\#\cL_V<q^s$, and $\#E_s(A)=q^t$, we see that the inequality above is 
\begin{align*}
    &\lesssim M q^t( q^{s+n-k}+q^{2s+n-2k} )\lesssim Mp^{t+s+n-k}.
\end{align*}
Combining with the lower bound $q^{a+2t}$, we obtain
\[ q^t\lesssim Mp^{n-k+s-a}. \]

\end{proof}

\section{Proof of Theorem \ref{mainthm}}\label{section5}

The goal of this section is to prove Theorem \ref{mainthm} which we restate here:
\begin{theorem}
    Let $A\subset \Fq^n$ be a set with $\#A=q^a$ $(0<a<n)$. For $s\in (0,a)$, define
    \[E_s(A):=\{ V\in G(k,\Fq^n): \#\pi_V(A)<q^s \}.  \]
Then for $s<\frac{a+2k-n}{2}$, we have
\begin{equation}\label{mainest}
    \# E_s(A)\le C_{n,k,a,s}\cdot\log q\cdot q^{t},
\end{equation}
where $t=\max\{k(n-k)+2(s-a), (k-1)(n-k)\}$. Here, $C_{n,k,a,s}$ is a constant that may depend on $n,k,a,s$, but not depend on $q$.
\end{theorem}

\subsection{Proof of Theorem \ref{mainthm}}

Let $A\subset \Fq^n$ with $\#A= q^a$. We consider two cases.

\medskip

\textbf{Case 1}: There exists a hyperplane, $H\in A(n-1,\Fq^n)$, such that 
\[
\#(A\cap H) \ge q^{s+n-k-1}.
\]
Let $H_0\in G(n-1,\Fq^n)$ be such that $H$ is parallel to $H_0$.
Then, we claim that every $V \in E_s(A)$ must satisfy $V^*\subset H_0$.

To see this, notice that if $V^*\in G(n-k,\Fq^n)$ is not contained in $H_0$, then $H\cap V^*$ is a $(n-k-1)$-dimensional plane, which means that $\#(H\cap V^*)= q^{n-k-1}$. Recalling the definition of $\pi_V$ in \eqref{defpiveq}, we have 
\[
\#\pi_V(A) \geq \#\pi_V(A\cap H) \ge \frac{\#(H\cap A)}{\#(H\cap V^*)}\geq q^s.
\]
So, $\{V^*:V\in E_s(A)\}\subset G(n-k,H_0)$.
It follows that 
\[\# E_s(A) \le \# G(n-k,H_0)\sim q^{(k-1)(n-k)}.\]

\bigskip

\textbf{Case 2}: Suppose for every hyperplane $H\in A(n-1,\Fq^n)$, we have that 
\[
\#(A\cap H) \leq q^{s+n-k-1}.
\]
First, we define
\[ M:=\sup_{\xi\in\Fq^n\setminus\{0\}}\sum_{V\in E_s(A)}\Id_{V}(\xi). \]
We denote $\#E_s(A)=q^t$.
By Lemma \ref{lemfalconer}, we have
\[
q^{t} \leq M q^{n-k+s-a}.
\]
To complete the proof, it remains to prove the following lemma.
\begin{lemma} \label{lemma:hyper1} 
\[
M  \lesssim \log q\cdot q^{(n-k)(k-1)+s-a}.
\]
\end{lemma}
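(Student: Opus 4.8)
The plan is to bound the overlap number $M$ directly by counting, for a fixed nonzero frequency $\xi$, how many exceptional $V\in E_s(A)$ have $\xi\in V^*$. By Lemma \ref{lem3}, writing $\ell_\xi$ for the line through $0$ and $\xi$, the condition $\xi\in V^*$ (equivalently $\ell_\xi\subset V^*$) is the same as $V\subset \ell_\xi^*$, where $\ell_\xi^*\in G(n-1,\F_p^n)$ is a hyperplane through the origin. So I want to show: for any fixed $H_0:=\ell_\xi^*\in G(n-1,\F_p^n)$,
\[
\#\{V\in E_s(A): V\subset H_0\}\lesssim \log p\cdot p^{(n-k)(k-1)+s-a}.
\]
The point of Case 2's hypothesis is exactly that no hyperplane translate carries too much of $A$; this will be used to guarantee that the slices of $A$ by translates of $H_0$ cannot all be huge, which in turn lets me run the Falconer-type argument (Lemma \ref{lemfalconer}) \emph{inside} $H_0$ on a typical slice.

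The key steps, in order. First, fix $H_0$ and decompose $A=\bigsqcup_{c} A_c$ into its slices by the $p$ parallel translates $H_0+c$ of $H_0$; so $\sum_c \#A_c=p^a$. By Case 2's hypothesis each $\#A_c\le p^{s+n-k-1}$. By pigeonhole (dyadic in the size of $\#A_c$, costing a factor $\lesssim\log p$), there is a dyadic value $N$ and a collection of slices, each of size $\sim N$, whose union $A'\subset A$ has $\#A'\gtrsim p^a/\log p$; in particular the number of such slices is $\sim \#A'/N\gtrsim p^a/(N\log p)$, and $N\le p^{s+n-k-1}$. Second, for any $V\in E_s(A)$ with $V\subset H_0$, the projection $\pi_V$ restricted to each slice $H_0+c$ is (after identifying $H_0+c$ with the $(n-1)$-dimensional space $H_0$) a projection in $H_0$ onto the $(n-k)$-plane $V$ within $H_0$; since $V\subsetneq H_0$ has codimension $k-1$ in $H_0$, this is the projection-to-$(n-k)$-planes problem in $\F_p^{n-1}$. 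And $\#\pi_V(A)\ge \#\pi_V(A_c)$ forces, for every slice, $\#\pi_V(A_c)<p^s$, so $V$ is exceptional (at level $s$) for \emph{every} slice $A_c$ simultaneously. Third, apply Lemma \ref{lemfalconer} inside $H_0\cong \F_p^{n-1}$ to one slice $A_c$ of size $\sim N=p^{a_c}$: the number of $V\subset H_0$ that are $s$-exceptional for $A_c$ is $\lesssim M_{H_0} p^{(n-1)-(n-k)+s-a_c}=M_{H_0}p^{k-1+s-a_c}$, where $M_{H_0}\lesssim p^{(k-2)(n-k)}$ is the trivial overlap bound (the analogue of the Remark following Lemma \ref{lemfalconer}, one dimension down: subspaces of $H_0$ of dimension $n-k$ containing a fixed line, there are $\sim p^{(k-2)(n-k)}$ of them). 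This gives
\[
\#\{V\subset H_0: V\in E_s(A)\}\lesssim p^{(k-2)(n-k)+k-1+s-a_c}.
\]
Finally, optimize over the choice of slice: we are free to use the slice with the largest $a_c$, i.e. with $\#A_c\sim N$ maximal subject to the pigeonhole bound $N\gtrsim p^a/(p\log p)=p^{a-1}/\log p$ — wait, that is the \emph{average} bound; better to just take \emph{any} slice from the popular dyadic family, which has $\#A_c\sim N$ with $N\ge p^{a}/(p\log p)$ only in the worst case, so $p^{a_c}=N$ and we must choose $N$ as large as the constraint permits. Since we only need one slice and we get to pick the popular value $N$, and the count $p^{(k-2)(n-k)+k-1+s-a_c}$ is decreasing in $a_c$, the worst case is $a_c$ as small as forced, namely $N$ at its \emph{minimum} guaranteed value — but the minimum guaranteed $N$ over the popular family is not controlled from below by more than $1$. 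The correct move is instead: the popular dyadic family has $\sim \#A'/N$ slices each of size $\sim N$; \emph{sum} the per-slice count is wasteful, so instead note every such $V$ lies in the exceptional set of \emph{each} of the $\sim p^a/(N\log p)$ slices, so in particular it lies in the exceptional set of the union, and re-run Lemma \ref{lemfalconer} in $H_0$ with the set $A'$ itself, $\#A'\sim p^a/\log p$: this yields $\#\{V\subset H_0:V\in E_s(A)\}\lesssim p^{(k-2)(n-k)}\cdot p^{k-1+s-(a-\log_p\log p)}\lesssim \log p\cdot p^{(k-2)(n-k)+k-1+s-a}=\log p\cdot p^{(n-k)(k-1)+s-a}$, using $(k-2)(n-k)+(k-1)=(k-1)(n-k)+(k-1)-(n-k)=(k-1)(n-k)-(n-k-(k-1))$; a quick check of this arithmetic: $(k-2)(n-k)+k-1 = (k-1)(n-k)-(n-k)+k-1 = (k-1)(n-k)+(k-1)-(n-k)$, which equals $(n-k)(k-1)$ precisely when $k-1=n-k$, i.e. not in general — so I must recompute, and the honest identity I should verify is that $(n-1)-(n-k)=k-1$ and the trivial overlap exponent inside $H_0$ is $((k-1)-1)(n-1-(n-k))=(k-2)(k-1)$ if I count $(n-k)$-subspaces of an $(n-1)$-space through a line, which is $G(n-k-1,(n-2)\text{-space})\sim p^{(k-2)(n-1-(n-k))}=p^{(k-2)(k-1)}$; combining $(k-2)(k-1)+(k-1)=(k-1)^2$ — this does not obviously match $(n-k)(k-1)$ either, so the dimension bookkeeping must be done carefully against the \emph{ambient} formula, not the naive one.

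The main obstacle, then, is exactly this last bookkeeping step: making precise the reduction "projection to $(n-k)$-planes in $\F_p^n$, restricted to a hyperplane slice, becomes projection in $\F_p^{n-1}$" and tracking which Grassmannian the exceptional $V$'s live in, so that the trivial overlap bound and the Falconer exponent combine to give exactly $(n-k)(k-1)+s-a$ rather than something off by a factor of $p^{n-k}$ or $p^{k-1}$. Concretely I expect the right statement is that $\{V\in G(n-k,\F_p^n): V\subset H_0\}$ is in bijection with $G(n-k,H_0)\cong G(n-k,\F_p^{n-1})$, which has size $\sim p^{(k-1)(n-k)}$ (matching the target when $s=a$, a good sanity check since then the exceptional set should be nearly everything), and that Lemma \ref{lemfalconer} applied in $\F_p^{n-1}$ to a set of size $p^{a'}$ (for the appropriately chosen popular slice with $a'\approx a-1$, since a generic slice of an $a$-dimensional set in an $n$-space has dimension $a-1$) with trivial overlap $\lesssim p^{(k-2)(n-k)}$ gives $\lesssim p^{(k-2)(n-k)}\cdot p^{(n-1)-(n-k)+s-a'}=p^{(k-2)(n-k)+(k-1)+s-(a-1)}=p^{(k-1)(n-k)+s-a}$, where the final equality is the identity $(k-2)(n-k)+(k-1)+1=(k-1)(n-k)$ $\iff$ $(k-1)-(n-k)+k=0$ — still not an identity, confirming that the precise value of $a'$ (how much a popular slice loses, whether it is $a-1$ exactly or $a-1+O(\log_p\log p)$) and the precise trivial-overlap exponent in the sliced problem are the two quantities that must be pinned down, and I expect the $\log p$ in the statement is precisely the pigeonholing cost incurred in selecting that popular slice.
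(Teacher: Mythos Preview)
Your setup is correct and matches the paper: fix $\xi_0$ achieving the supremum, pass to the hyperplane $H_0=\ell_{\xi_0}^*$, set $\Theta:=\{V\in E_s(A):V\subset H_0\}$ so that $\#\Theta=M$, slice $A=\bigsqcup_c A_c$ by translates of $H_0$, and pigeonhole on slice sizes. The gap is in what comes next. Both of your attempts to finish by a \emph{single direct application} of Falconer in $\Fp^{n-1}$ fail for structural (not bookkeeping) reasons. Applying \eqref{eqremark} inside $H_0$ to one slice $A_c$ gives the correct bound $\#\Theta\lesssim p^{(k-1)(n-k)+s-a_c}$, but nothing forces any individual $a_c$ to be close to $a$; if every slice has size $\sim p^{a-1}$ you are off by a factor of $p$, and there is no reason a larger slice must exist. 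Your fix --- apply Falconer ``in $H_0$'' to the union $A'$ of popular slices with $\#A'\sim p^a/\log p$ --- is not well-defined: $A'$ lies in $\Fp^n$, not in any single translate of $H_0$, so there is no set of size $\sim p^a$ inside $\Fp^{n-1}$ to which the $(n-1)$-dimensional estimate applies (and overlaying the slices into $H_0$ can collapse $A'$ drastically). Another red flag: neither the Case~2 hypothesis $\#A_c\le p^{s+n-k-1}$ nor the standing assumption $s<\tfrac{a+2k-n}{2}$ is ever used in your computations, yet the paper's proof needs both.

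The missing idea is to use Falconer in $H_0$ \emph{contrapositively}, as a lower bound on $\sum_{V\in\Theta}\#\pi_V(A_i)$ in which $\#\Theta$ itself appears (this is the paper's Lemma~\ref{lemma:hyper2}): choosing $p^{s'}\sim\min\{p^{k-1},\,\#A_i\cdot p^{-(n-k)(k-1)}\cdot\#\Theta\}$, Falconer in $\Fp^{n-1}$ shows at most $\tfrac12\#\Theta$ of the $V\in\Theta$ can be $s'$-exceptional for $A_i$, hence $\sum_{V\in\Theta}\#\pi_V(A_i)\gtrsim \#\Theta\cdot p^{s'}$. Now sum over $i$: since $V\subset H_0$ implies the cosets of $V$ refine the cosets of $H_0$, one has $\#\pi_V(A)=\sum_i\#\pi_V(A_i)$ (not merely $\ge$), so $\#\Theta\cdot p^s>\sum_{V\in\Theta}\#\pi_V(A)=\sum_i\sum_{V\in\Theta}\#\pi_V(A_i)$. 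On the popular dyadic family this yields, when the second branch of the $\min$ is active, $\#\Theta\cdot p^s\gtrsim (\log p)^{-1}p^{a-(n-k)(k-1)}(\#\Theta)^2$, which solves to $M=\#\Theta\lesssim \log p\cdot p^{(n-k)(k-1)+s-a}$. The Case~2 bound $p^\beta\le p^{s+n-k-1}$ together with $s<\tfrac{a+2k-n}{2}$ is exactly what rules out the first branch $p^{k-1}$ of the $\min$.
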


\begin{proof}
Let $\xi_0$ be the point in $\Fq^n - \{0\}$ such that 
\begin{equation}\label{M}
   M = \#\{V\in E_s(A):\xi_0\in V\}.
\end{equation}

We know that such a $\xi_0$ exists since there are only finitely many $\xi$. 
Let $\ell$ be the line passing through $0$ and $\xi_0$, and let $H=\ell^*$ which is a hyperplane. We should view $\ell$ as a line in the frequency space, and view $H$ as a hyperplane in the physical space.
Define the set
\[
\Theta := \{V \in E_s(A):\ell\subset V\}=\{V\in E_s(A): V^*\subset H\}.
\]
By \eqref{M}, $\#\Theta=M$.

Now, we decompose $\Fq^n$ into $(n-1)$-dimensional planes that are parallel to $H$:
\[\Fq^n = \bigsqcup_{i=1}^q H_i.\]
Let $A_i = H_i \cap A$, which is the intersection of $A$ with each slice.
\begin{lemma}\label{lemma:hyper2}
  For each $i$,  we have that
\[
\sum_{V \in \Theta} \#\pi_V(A_i) \gtrsim \# \Theta \min\{q^{k-1}, \#A_i q^{-(n-k)(k-1)}\#\Theta\}.
\]
\end{lemma}

\begin{proof}
If $\#A_i p^{-(n-k)(k-1)}\#\Theta\le C$ for some large constant $C$, then the estimate trivially holds since $\#\pi_V(A_i)\ge 1$. Therefore, we assume $\#A_i p^{-(n-k)(k-1)}\#\Theta>C$.

    The proof is by applying \eqref{eqremark} to the set $A_i$ in the $(n-1)$-dimensional space $H_i$($\cong \Fq^{n-1}$). This is actually an exceptional set estimate for projection to $(k-1)$-planes in $\Fq^{n-1}$. 

    Let \begin{align*}
        E:&=\{ V^*\in G(n-k,H): \#\pi_V(A_i)<q^{s'} \}\\
        &=\{V^*\in G(n-k,H):\#\{\textup{translated~copies~of~}V^*\textup{~to~cover~}A_i\}<q^{s'} \},
    \end{align*} 
    where $s'$ is to be determined. \eqref{eqremark} yields that
    \[ \#E\lesssim q^{(k-1)(n-k)+s'}(\#A_i)^{-1}. \]
    We choose $s'$ to be such that
    \[ q^{s'}=C^{-1}\min\{ q^{k-1}, \#A_i q^{-(n-k)(k-1)}\#\Theta \}, \]
    where $C$ is a large constant. Plugging into the upper bound of $\#E$, we see that
    \[ \#E\le \frac{1}{2}\#\Theta. \]
    Therefore,
    \[ \sum_{V\in\Theta}\#\pi_V(A_i)\ge\sum_{V\in\Theta\setminus E}\#\pi_V(A_i)\ge \frac{1}{2}\#\Theta q^{s'}\gtrsim\# \Theta \min\{q^{k-1}, \#A_i q^{-(n-k)(k-1)}\#\Theta\}. \]
\end{proof}

We continue the proof. By a Fubini-type argument, we have
\begin{align*}
    \#\Theta \cdot q^s
    &\geq \sum_{V\in\Theta} \#\pi_V(A) \\
    &= \sum_{V\in\Theta} \sum_{i=1}^q \#\pi_V(A_i) \\
    &= \sum_{i=1}^q \sum_{V\in\Theta} \#\pi_V(A_i).
\intertext{Applying Lemma \ref{lemma:hyper2}, we have that}
   \#\Theta \cdot q^s  &\gtrsim \sum_{i=1}^q \#\Theta \min\{q^{k-1}, \# A_i q^{-(n-k)(k-1)}\#\Theta\}. 
\intertext{By dyadic pigeonholing, we choose $I$ which is a subset of these $i$, such that there exists $\beta>0$ with $\#A_i \sim q^\beta$ for $i\in I$, and $\#I \cdot q^\beta\gtrsim (\log q)^{-1}\#A$. Thus,}
\#\Theta \cdot q^s &\gtrsim \sum_{i\in I} \#\Theta \min\{q^{k-1}, q^{\beta -(n-k)(k-1)}\#\Theta \}.
\end{align*}
Also, recall the assumption at the beginning of \textbf{Case 2}: \begin{equation}\label{case2assumption}
    q^\beta\le q^{s+n-k-1}.
\end{equation}

We now have two cases depending on where the minimum is achieved. Firstly, if $q^{k-1} \geq q^{\beta - (n-k)(k-1)}\#\Theta$, we have that 
\[
\#\Theta\cdot q^s \gtrsim \#I \cdot q^\beta \cdot q^{-(n-k)(k-1)}(\#\Theta)^2 \gtrsim (\log q)^{-1} q^{a-(n-k)(k-1)}(\#\Theta)^2.
\]
Therefore, $M=\#\Theta\leq \log q \cdot q^{(n-k)(k-1)+s-a}$, which finishes the proof of Lemma \ref{lemma:hyper1}.

The second scenario is $q^{k-1} \leq q^{\beta - (n-k)(k-1)}\#\Theta$. We will show that this will not happen. If it happens, we have 
\[
\#\Theta\cdot q^s \gtrsim \#I\cdot \#\Theta\cdot q^{k-1}.
\]
Multiplying $q^\beta$ on both sides gives
\[q^\beta \#I \cdot q^{k-1} \lesssim q^\beta q^s.\]
This together with \eqref{case2assumption} implies that 
\[
q^{a+(k-1)}(\log q)^{-1} \lesssim q^{s+\beta} \le q^{2s+n-k-1}.
\]
When $q$ is big enough,
this is a contradiction, as we assumed that $s<\frac{a+2k-n}{2}$. Thus, Lemma \ref{lemma:hyper1} is proved.

\end{proof}

\begin{remark}
 {\rm 
    The main obstacle to generalize the proof to $\R^n$ is as follows. Let $A\subset \R^n$ with $\dim(A)=a$. Let $\{V_t\}_{t\in \R}$ be the one-parameter family of $(n-1)$-planes, where each $V_t$ is orthogonal to $(0,\dots,0,1)$ and intersects with the $x_n$-axis at $(0,\dots,0,t)$. Set $A_t=A\cap V_t$. If everything is finite, then $\#A=\sum_t \#A_t$,
which implies there exists $1\le M\le \#A$ such that
\[ \#A \lesssim \log(\#A)\cdot M\cdot\#\{t: \#A_t\le M\}.\]
In the continuous setting, we hope there exists $\beta\in(0,a]$ such that
\[ \dim(A)\le \beta+\dim(\{ t: \dim(A_t)\ge\beta \}).  \]
This roughly says that if $A$ is big, then we can find many big slices $\{A_t\}$ of $A$. If we replace ``$\le$" by ``$\ge$" in the inequality above, then it is always true. However, it may fail in the reverse direction. Actually, there exists a set $A$ with $\dim(A)=n$ but $\dim(A_t)=0$ for all $t$. This failure of Fubini-type argument is the main obstacle to generalize our theorem to $\R^n$.}
\end{remark}

\bibliographystyle{abbrv}
\bibliography{bibli}

\end{document}